\documentclass{article}
\usepackage[utf8]{inputenc}
\usepackage{amsmath}
\usepackage{amsthm}
\usepackage{amssymb}
\usepackage{tikz,pgfplots}
\pgfplotsset{compat=1.11}
\usepgfplotslibrary{fillbetween}
\usetikzlibrary{arrows, automata, patterns, intersections}
\usepackage{hyperref}
\usepackage{cite}
\usepackage{authblk}
\usepackage{graphicx}
\usepackage{adjustbox}
\usepackage{caption}
\usepackage{subcaption}
\usepackage{appendix}

\newtheorem{definition}{Definition}
\newtheorem{question}{Question}

\newtheorem{theorem}{Theorem}
\newtheorem{lemma}{Lemma}

\bibliographystyle{plain}
\title{Cycle Intersection Graphs and Minimum Decycling Sets of Even Graphs}
\author[1]{Michael Cary\footnote{macary@mail.wvu.edu}}
\affil{Department of Computer Science and Center for Alternative Fuels, Engines, and Emissions, West Virginia University}
\begin{document}
\maketitle
\begin{abstract}
We introduce the cycle intersection graph of a graph, an adaptation of the cycle graph of a graph, and use the structure of these graphs to prove an upper bound for the decycling number of all even graphs. This bound is shown to be significantly better when an even graph admits a cycle decomposition in which any two cycles intersect in at most one vertex. Links between the cycle rank of the cycle intersection graph of an even graph and the decycling number of the even graph itself are found. The problem of choosing an ideal cycle decomposition is addressed and is presented as an optimization problem over the space of cycle decompositions of even graphs.
\end{abstract}

\noindent\textbf{Keywords:} decycling, decycling number, even graph, spanning tree, cycle rank\\

\noindent\textbf{AMS Mathematics Subject Classification:}  05C05, 05C38, 05C45

\section{Introduction}
Decycling graphs, the process of deleting vertices from a graph until the graph becomes acyclic, is the vertex analog of the notion of cycle rank. While there is an extensive literature dedicated to studying the cycle rank of graphs, decycling graphs is a relatively newer topic. The topic of decycling graphs also known as a feedback vertex set, primarily focuses on studying decycling sets in order to find the decycling number of a graph. While the cycle rank of a graph $G$ with $c$ components has an explicit formula, $|E(G)|-|V(G)|-c$, finding the decycling number of a graph has been proven to be NP-hard \cite{hartnell2008,karp1972}. Before continuing, we provide formal definitions of decycling sets and the decycling number of a graph.

\begin{definition}\label{d1}
Let $G$ be a graph and $S\subset V(G)$ be a subset of the vertex set. If $G\setminus S$ is acyclic, then $S$ is said to be a decycling set of $G$.
\end{definition}
\begin{definition}\label{d2}
The decycling number of a graph, $\nabla(G)$, is the size of a smallest decycling set.
\end{definition}

As stated, computing and bounding the decycling number is the primary focus of the study of decycling graphs. One of the key contributions of this paper will ultimately be to link the notions of cycle rank and decyling number in the case of even graphs. Even graphs are graphs in which every vertex has even degree. We introduce the cycle intersection graph of a graph, which is defined as follows.

\begin{definition}\label{d3}
Let $G$ be a graph. The cycle intersection graph of $G$ is the graph $CI(G)$ whose vertex set is the set of cycles in a given cycle decomposition of $G$ and in which the edge set is a mapping of the unique intersections between cycles.
\end{definition}

As can be seen from the definition of a cycle intersection graph, even graphs are the only family of graphs whose entire edge set is characterized by their cycle intersection graph. However, spanning even subgraphs are a common and highly useful tool in many areas on graph theory and properties of their existence and structure in graphs and digraphs are well known \cite{cary2017,catlin1988}. The intention of developing this approach to find decyling sets of even graphs is that it can immediately serve as a useful tool for finding decycling sets of other graphs, possibly via either maximum spanning even subgraphs or minimum containing even graphs.

The motivation for introducing cycle intersection graphs of graphs stems from a similar concept that has been used on occasion in graph theory, namely the cycle graph of a graph. The cycle graph of a graph similarly represents cycles in the original graph as vertices, but only joins two vertices if their corresponding cycles share an edge. Our concept is essentially an edge-disjoint analog of this previous concept. For more on cycle graphs of graphs and some of their applications in the literature, the reader may be interested in any of \cite{egawa1991,lopez1995,tan1993,van1992}.

Before detailing the structure of this paper, we take the time to familiarize the reader with previous results on decycling graphs. The very first results on the decycling number of graphs, produced in the seminal paper on the topic, bounded the decycling number of hypercubes \cite{beineke1997}. Improving the bound on the decycling number of hypercubes was continued in \cite{bau2001} and \cite{pike2003}. This body of work inspired research on the decycling number of other practical network structures \cite{wang2015,zdeborova2016}. In more traditional graph theoretic directions, there has been progress on bounding the decycling of regular graph in general \cite{punnim2012}, as well as for cubic graphs specifically \cite{punnim2005,punnim2007} including finding the exact decycling number of the (generalized) Petersen graph \cite{gao2015}. The decycling number of random regular graphs was studied in \cite{bau2002}. Avoiding assumptions of regularity, \cite{levit2011} proved results linking the alternating number of independent sets and the decycling number of a graph, and \cite{salavatipour2006} studied this problem by linking the decycling number of graphs to the problem of finding large spanning forests of graphs in the case of planar graphs. The decycling number of the Cartesian product of a graph and a complete graph was proven in \cite{hartnell2008}. For a survey of results on decycling graphs, the reader is referred to \cite{bau2007}.

\section{Cycle Intersection Graphs and Decycling Sets}
We begin this work by using the notion of a cycle intersection graph to generate a specific decycling set of the original graph. Our focus will be restricted to even graphs until the very last section of this paper, and we note now that all graphs are assumed to be connected, for otherwise one may simple sum the decycling number of each connected component in order to find the decycling number of a disconnected graph. This section in particular will study the special case of even graphs which admit a unique cycle decomposition, or, as we will find out is equivalent, those even graphs whose cycle intersection graphs are trees.

\begin{theorem}\label{t1}
$\nabla(G)\leq|E(CI(G))|$.
\end{theorem}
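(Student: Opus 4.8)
The plan is to write down an explicit decycling set $S\subseteq V(G)$ of size at most $|E(CI(G))|$. Fix the cycle decomposition $\mathcal{D}=\{C_1,\dots,C_k\}$ of $G$ that underlies $CI(G)$, and recall that every edge of $CI(G)$ records an intersection of two cycles of $\mathcal{D}$ in a vertex of $G$; label each such edge by that vertex. Let $S$ be the image of this labelling map, i.e.\ the set of all vertices of $G$ lying on at least two cycles of $\mathcal{D}$. Because $S$ is the image of a map whose domain is $E(CI(G))$ (a vertex of $G$ shared by several cycles of $\mathcal{D}$ simply labels several edges, which only helps the count), we get $|S|\le |E(CI(G))|$ for free, and it remains to show that $G\setminus S$ is acyclic.

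For that, I would decompose $G\setminus S$ according to $\mathcal{D}$: each edge of $G\setminus S$ lies in a unique cycle $C_i$, so $G\setminus S=\bigcup_i H_i$ with $H_i:=C_i\setminus S$. The crucial point is that this union is \emph{vertex-disjoint}: if some $u\notin S$ were incident with an edge of $C_i$ and with an edge of $C_j$ for $i\ne j$, then $u$ would lie on both $C_i$ and $C_j$ and hence belong to $S$. So every component of $G\setminus S$ is a component of some $H_i$, and each $H_i$ is a subgraph of the cycle $C_i$.

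Finally I would argue $H_i\ne C_i$. If $C_i\cap S=\emptyset$ then $C_i$ shares no vertex with any other cycle of $\mathcal{D}$, so $C_i$ is a connected component of $G$; connectedness of $G$ then forces $\mathcal{D}=\{C_i\}$, which is the degenerate case in which $CI(G)$ has no edges, and we may assume the decomposition has at least two cycles. Under that assumption every $C_i$ meets $S$, so each $H_i$ is a cycle with at least one vertex deleted, hence a disjoint union of paths. Therefore $G\setminus S$ is a vertex-disjoint union of linear forests, so it is a forest, $S$ is a decycling set, and $\nabla(G)\le|S|\le|E(CI(G))|$.

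This is a short argument, and I expect the only delicate points to be bookkeeping: pinning down the labelling map precisely enough that the bound $|S|\le|E(CI(G))|$ is unambiguous when a vertex of $G$ belongs to three or more cycles of $\mathcal{D}$, and separating off the trivial case in which $G$ is a single cycle. The load-bearing step is the vertex-disjointness of the pieces $C_i\setminus S$; that is exactly what prevents fragments of distinct cycles from recombining into a new cycle after the deletion.
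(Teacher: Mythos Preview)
Your argument is correct and follows essentially the same approach as the paper: take $S$ to be the image of the natural map $E(CI(G))\to V(G)$, observe that the pieces $C_i\setminus S$ are vertex-disjoint, and use connectedness of $G$ to ensure each decomposition cycle loses at least one vertex. Your write-up is more careful than the paper's in one respect: the paper asserts only that $G\setminus S$ is ``disconnected'' with each decomposition cycle losing a vertex, whereas you make explicit the vertex-disjointness of the fragments $H_i$ and hence why no \emph{new} cycle can form from pieces of several $C_i$'s---which is exactly the step needed to conclude acyclicity rather than mere disconnectedness.
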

\begin{proof}
Let $G$ be a connected even graph, let $CI(G)$ be its cycle intersection graph, and let $f: E(CI(G))\rightarrow V(G)$. Since each edge in $CI(G)$ represents a vertex in $V(G)$, we may let $S\subset V(G)$ be the image of $f$. Clearly $G\setminus S$ is disconnected as no two cycles remain adjacent in $G$ and every cycle of $G$ has at least one vertex removed (else $G$ was not connected). 
\end{proof}
Do note that the above proof does not imply that the map $f$ is injective. Several cycles may all intersect at one vertex. In an extremal case, the cycle intersection graph could be a complete graph on $n>>1$ vertices, yet all of the represented cycles share a single, common vertex, in which case the decycling number of the graph would be one.

Returning our attention to bounding the decycling number of even graphs, one potential place to start searching for an upper bound on the decycling number of even graphs would be the size of a minimum cycle decomposition of the graph. But as it turns out, at least in some cases, this bound can actually be improve upon.

When considering the cycle intersection graph of an even graph, one possibility is that the cycle intersection graph is a tree. Whenever this is the case, the previous theorem tells us that there is a decycling set strictly smaller than the size of a smallest cycle decomposition. In fact, the following theorem gives an explicit value for the decycling number of even graphs whose cycle intersection graphs are trees. First, however, we prove some necessary lemmas.

\begin{lemma}\label{l1}
Let $G$ be an even graph and $CI(G)$ its cycle intersection graph. If $CI(G)$ is a tree, then $G$ has a unique cycle decomposition.
\end{lemma}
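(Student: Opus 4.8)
The plan is to prove the contrapositive: if $G$ admits two distinct cycle decompositions, then $CI(G)$ contains a cycle and hence is not a tree. So suppose $\mathcal{D}_1$ and $\mathcal{D}_2$ are two distinct cycle decompositions of $G$. Since $CI(G)$ is built from one \emph{given} decomposition, I should be careful about what is being claimed; the cleanest reading is that when $CI(G)$ (for some fixed decomposition $\mathcal{D}$) is a tree, then $\mathcal{D}$ is the \emph{only} cycle decomposition of $G$. First I would recall the standard fact that a connected even graph has a cycle decomposition, and that any two cycle decompositions are related by the symmetric-difference operation on edge sets: if $\mathcal{D}_1 \neq \mathcal{D}_2$ then the set of edges $H = \bigcup \mathcal{D}_1 \,\triangle\, \bigcup(\text{common cycles})$ — more concretely, take a cycle $C \in \mathcal{D}_1$ that is not in $\mathcal{D}_2$; its edges are partitioned among several cycles of $\mathcal{D}_2$, and conversely.

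The key structural step is this: starting from the tree decomposition $\mathcal{D}$ with $CI(G)$ a tree, I want to manufacture an alternative decomposition and show it forces a cycle in some intersection graph, contradicting uniqueness — or, running it the other way, assume non-uniqueness and build a closed walk in $CI(G)$. Concretely, suppose $\mathcal{D}'\neq\mathcal{D}$. Pick an edge $e$ on which the two decompositions ``disagree'': $e$ lies on cycle $C\in\mathcal{D}$ and on cycle $C'\in\mathcal{D}'$ with $C\neq C'$. Follow $C'$ away from $e$; since $C'\notin\mathcal{D}$, $C'$ must use edges from at least two distinct cycles of $\mathcal{D}$, so there is a vertex $v$ where $C'$ transitions from one $\mathcal{D}$-cycle $C_1$ to another $\mathcal{D}$-cycle $C_2$, giving an edge $C_1C_2$ in $CI(G)$. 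Continuing around the closed cycle $C'$, the sequence of $\mathcal{D}$-cycles it passes through returns to its start, producing a closed walk in $CI(G)$; because $CI(G)$ is a tree, this closed walk must backtrack, and I would argue that backtracking at a cut vertex of the tree forces $C'$ to re-enter and re-exit the same ``branch'' of the decomposition in a way that contradicts $C'$ being a single cycle (it would have to repeat a vertex of $G$ other than by closing up). Alternatively, and perhaps more cleanly, I can use an edge count: if $CI(G)$ is a tree on $k$ vertices it has $k-1$ edges, so by Theorem~\ref{t1} the intersection pattern is as sparse as possible, and the cycles of $\mathcal{D}$ are ``glued at cut vertices'' — deleting any single intersection vertex disconnects $G$ — which I would show leaves no room to reroute edges into a different cycle.

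The main obstacle is the bookkeeping in the rerouting argument: making precise that an alternative cycle $C'$ which threads through several blocks of the tree-like structure must revisit a vertex of $G$, hence fails to be a cycle. I expect to handle this by induction on the number of vertices of $CI(G)$, peeling off a leaf cycle $C_\ell$ of the tree $CI(G)$: $C_\ell$ meets the rest of $G$ in exactly one vertex $w$, so $C_\ell$ is the unique cycle through any of its edges (any cycle through an edge of $C_\ell$ is trapped in the block $C_\ell$ and must be $C_\ell$ itself since $C_\ell$ is already a cycle), and then apply the inductive hypothesis to $G$ with the edges of $C_\ell$ removed, whose cycle intersection graph is $CI(G)$ minus a leaf, still a tree. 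The base case, $CI(G)$ a single vertex, is just the statement that a graph which is itself a single cycle has a unique cycle decomposition, which is immediate.
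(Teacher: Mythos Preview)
Your proposal meanders through several sketches, but the inductive argument you land on at the end---peel off a leaf cycle $C_\ell$ of the tree $CI(G)$, observe that $C_\ell$ meets the rest of $G$ in a single vertex so that $C_\ell$ must appear in every cycle decomposition, then recurse on $G$ with the edges of $C_\ell$ removed---is exactly the paper's proof. Your justification of the key step (any cycle through an edge of $C_\ell$ is trapped in that block, hence equals $C_\ell$) is in fact more explicit than the paper's one-line claim that ``$C_v$ must be a member of every cycle decomposition of $G$''.

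The earlier contrapositive/closed-walk idea (follow an alien cycle $C'$ through the $\mathcal{D}$-cycles and produce a closed walk in $CI(G)$) is a genuinely different route, but as you yourself note, the bookkeeping needed to rule out backtracking in the tree is delicate and you do not complete it; the leaf-peeling induction is both cleaner and what the paper does, so I would drop the first two paragraphs of exploration and present only the induction.
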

\begin{proof}
The proof is by induction on the size of a smallest cycle decomposition. If $G$ consists of a single cycle, then the proof is trivial, so assume that $CI(G)$ has more than one vertex. Let $\mathcal{F}(G)$ be a smallest possible cycle decomposition of $G$ (with respect to the cardinality of $\mathcal{F}(G)$) and let $f:\mathcal{F}(G)\rightarrow V(CI(G))$ be an isomorphism between sets. Let $v\in V(CI(G))$ be a leaf in $CI(G)$ and let $C_{v}$ be the pre-image of $v$ under $f$. Consider the graph $G^{\prime}=G\setminus C_{v}$. By our inductive hypothesis, $G^{\prime}$ has a unique cycle decomposition, call it $\mathcal{F}(G^{\prime})$. By seeing that $C_{v}\cap G^{\prime}=\{v\}$ we see that $C_{v}$ must be a member of every cycle decomposition of $G$, hence $\mathcal{G}=\mathcal{F}(G^{\prime})\cup C_{v}$ is the only cycle decomposition of $G$ and the proof is complete. 
\end{proof}

The careful reader will notice that Theorem \ref{t1} is actually a corollary to this lemma. In fact,  turns out that we can actually improve upon this result with only slightly more effort. To do this, we must first introduce a notation. Let $CI(G)$ be the cycle intersection graph of an even graph. We denote a minimum spanning forest of $CI(G)$ by $MSF(CI(G)))$. Additionally, by the size of a minimum spanning forest, $|MSF(CS(AG))|$, we mean the total number of edges and isolated vertices of the spanning forest, i.e., $|MSF(CI(G))|=|E(MSF(CI(G)))|+|\{v\in V(MSF(CI(G)))\ |\ d(v)=0\}|$.

\begin{lemma}\label{l2}
Let $G$ be an even graph and $CI(G)$ its cycle intersection graph. If $CI(G)$ is a tree, then $\nabla(G)\leq |MSF(CI(G))|$.
\end{lemma}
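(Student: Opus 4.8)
The plan is to convert a minimum spanning forest of $CI(G)$ into an edge cover of $CI(G)$ of no larger size, and then to push that edge cover through the vertex-selection map used in the proof of Theorem \ref{t1}. First I would dispose of the degenerate case in which $CI(G)$ has a single vertex: then $G$ is a single cycle, so $\nabla(G)=1$, while $MSF(CI(G))$ consists of one isolated vertex and $|MSF(CI(G))|=1$, so the bound holds. Henceforth assume $CI(G)$ has at least two vertices; being a connected tree, it then has no isolated vertices.

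Next, write $F=MSF(CI(G))$ and build an edge set $E^{*}\subseteq E(CI(G))$ as follows. Every vertex of $F$ of positive degree is already covered by $E(F)$; for each isolated vertex $v$ of $F$ — which, since $CI(G)$ itself has no isolated vertices, is incident in $CI(G)$ to at least one edge — throw one such incident edge into the collection. The resulting set $E^{*}$ covers every vertex of $CI(G)$, i.e.\ it is an edge cover, and it satisfies $|E^{*}|\leq |E(F)|+|\{v\in V(F):d_{F}(v)=0\}|=|MSF(CI(G))|$. (Note that minimality of $F$ is not actually needed for this inequality, so the argument is robust to the precise reading of ``minimum''.)

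Now apply the map $f:E(CI(G))\rightarrow V(G)$ from the proof of Theorem \ref{t1}: an edge $e$ of $CI(G)$ joining cycles $C_{i}$ and $C_{j}$ of the decomposition is sent to a vertex $f(e)$ lying on both $C_{i}$ and $C_{j}$. Put $S=f(E^{*})\subseteq V(G)$. Since $E^{*}$ is an edge cover of $CI(G)$, every cycle of the decomposition of $G$ loses at least one of its vertices in $G\setminus S$, and plainly $|S|\leq|E^{*}|\leq|MSF(CI(G))|$. To finish I would argue that $G\setminus S$ is actually acyclic: because $CI(G)$ is a tree, Lemma \ref{l1} gives $G$ a unique cycle decomposition, and the tree structure forces the decomposition cycles to meet only at cut vertices of $G$, so that every cycle of $G$ coincides with one of the $C_{i}$. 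Hence destroying every $C_{i}$ destroys every cycle of $G$, $S$ is a decycling set, and $\nabla(G)\leq|S|\leq|MSF(CI(G))|$.

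I expect the main obstacle to be precisely this last step — checking that no cycle of $G$ survives once every decomposition cycle has been hit. This needs the short structural observation that when $CI(G)$ is a tree the graph $G$ is a ``tree of cycles'' whose blocks are exactly the $C_{i}$ (equivalently, the single-vertex intersections are cut vertices), so that a cycle of $G$ using edges of two distinct $C_{i}$ would have to traverse a cut vertex twice; everything else is bookkeeping, namely the spanning-forest-to-edge-cover conversion and the reuse of the map from Theorem \ref{t1}. As an alternative, one could instead run an induction on $|V(CI(G))|$ paralleling the proof of Lemma \ref{l1}, peeling off one leaf cycle at a time, but tracking the behaviour of $|MSF(CI(G))|$ under leaf deletion is more delicate than the direct edge-cover argument above.
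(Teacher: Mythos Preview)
Your proof is correct and follows the same line as the paper's: take the minimum spanning forest of $CI(G)$, turn it into an edge cover of $CI(G)$, push that cover through the map $f:E(CI(G))\to V(G)$, and invoke Lemma~\ref{l1} to conclude that hitting every decomposition cycle decycles $G$. The paper compresses all of this into two sentences and leaves implicit both the isolated-vertex-to-edge conversion and the ``tree of cycles''/block-structure justification you spell out, so your version is really just a more careful rendering of the same argument.
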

\begin{proof}
$MSF(CI(G))$ covers every vertex of $CI(G)$ hence removes a vertex from each cycle in $G$. By the previous lemma, the set of vertices of $G$ corresponding to the edges of $MSF(CI(G))$ constitutes a decycling set of $G$.
\end{proof}

\begin{lemma}\label{l3}
Let $G$ be an even graph and $CI(G)$ its cycle intersection graph. If $CI(G)$ is a tree, then any minimum decycling set of $G$ constitutes a spanning forest of $CI(G)$.
\end{lemma}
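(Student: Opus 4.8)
The plan is to attach to a minimum decycling set $S$ an explicit spanning subgraph $T_{S}$ of $CI(G)$ and to show that $S$ \emph{constitutes} it, meaning the edges of $T_{S}$ together with its isolated vertices are in natural bijection with $S$ (so that $|S|=\nabla(G)$ matches the size of $T_{S}$ in the counting introduced above for $MSF$). The ``forest'' half of ``spanning forest'' will be free because $CI(G)$ is a tree, the ``spanning'' half because $S$ meets every cycle of $G$, and the bijection because a minimum decycling set contains no redundant vertex.

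First I would pin down the structure forced by the hypothesis. By Lemma~\ref{l1}, $G$ has a unique cycle decomposition $\mathcal{F}(G)$, which we identify with $V(CI(G))$. Peeling off leaf cycles as in the proof of Lemma~\ref{l1}, one checks that $CI(G)$ being a tree makes $G$ a cactus: no vertex of $G$ lies on three members of $\mathcal{F}(G)$ (three would be pairwise adjacent in $CI(G)$, giving a triangle), no two members of $\mathcal{F}(G)$ share two vertices (by Definition~\ref{d3} a second common vertex forces a second edge of $CI(G)$ between the corresponding vertices), and hence the cycles of $G$ are exactly the members of $\mathcal{F}(G)$. Two consequences are what we need. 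First, the map $f$ of Theorem~\ref{t1} restricts to a bijection between the set $J$ of \emph{junction vertices} of $G$ (those lying on two members of $\mathcal{F}(G)$) and $E(CI(G))$, while every vertex outside $J$ lies on a unique cycle $C(v)\in\mathcal{F}(G)$. Second, a set $S\subseteq V(G)$ is a decycling set of $G$ if and only if it meets every member of $\mathcal{F}(G)$.

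Now let $S$ be a minimum decycling set, write $S=S_{J}\cup S_{0}$ as the disjoint union of $S_{J}=S\cap J$ and $S_{0}=S\setminus J$, and let $T_{S}$ be the subgraph of $CI(G)$ with vertex set $V(CI(G))$ and edge set $\{\,f^{-1}(s):s\in S_{J}\,\}$. As a subgraph of the tree $CI(G)$ on its whole vertex set, $T_{S}$ is a forest spanning $V(CI(G))$. It remains to see that its isolated vertices correspond bijectively to $S_{0}$, so that the number of edges plus isolated vertices of $T_{S}$ is $|S_{J}|+|S_{0}|=|S|$. For $s\in S$ the set $S\setminus\{s\}$ has size less than $\nabla(G)$, hence is not a decycling set, so $G\setminus(S\setminus\{s\})$ contains a cycle $Z$; since $Z$ avoids $S\setminus\{s\}$ but $S$ decycles $G$, we get $Z\cap S=\{s\}$. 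As the cycles of $G$ are the members of $\mathcal{F}(G)$, when $s\in S_{0}$ this forces $Z=C(s)$, so $C(s)$ contains no vertex of $S_{J}$ and is therefore isolated in $T_{S}$, and moreover $s\mapsto C(s)$ is injective on $S_{0}$ (if $C(s)=C(s')$ for $s\neq s'$ in $S_{0}$ then $C(s)\cap S$ would contain both). Conversely, an isolated vertex $C$ of $T_{S}$ carries no vertex of $S_{J}$ (such a vertex would contribute an edge of $T_{S}$ at $C$) yet is met by $S$, so $C=C(s)$ for some $s\in S_{0}$; hence $s\mapsto C(s)$ maps $S_{0}$ bijectively onto the isolated vertices of $T_{S}$. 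Combining this with $f^{-1}\colon S_{J}\to E(T_{S})$ being a bijection shows that $S$ constitutes the spanning forest $T_{S}$.

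The real obstacle lies entirely in the second step: showing that the tree hypothesis forces the cactus structure, and in particular that no two members of $\mathcal{F}(G)$ can share two or more vertices — this is where Definition~\ref{d3} must be read carefully and combined with the leaf-peeling argument underlying Lemma~\ref{l1}. Everything afterward is routine: ``forest'' is inherited from $CI(G)$, ``spanning'' from $S$ being a decycling set, and the cardinality bijection is just the statement that a minimum decycling set wastes no vertex.
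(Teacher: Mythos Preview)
Your argument is correct and follows the same underlying idea as the paper --- build a spanning subgraph of $CI(G)$ from a minimum decycling set via the edge--intersection-vertex correspondence --- but you supply considerably more than the paper does. The paper's proof is two sentences: it observes that isolated vertices count in a spanning forest, that a decycling set meets every cycle, and that $CI(G)$ being a tree finishes it. You make explicit three points the paper leaves implicit: (i) the cactus structure (each vertex on at most two decomposition cycles, any two cycles meeting in at most one vertex, hence $\mathcal{F}(G)$ exhausts the cycles of $G$); (ii) the precise construction of $T_{S}$ and the split $S=S_{J}\cup S_{0}$; and (iii) the use of minimality to obtain the bijection $S_{0}\leftrightarrow\{\text{isolated vertices of }T_{S}\}$, so that $|T_{S}|=|S|$ in the edges-plus-isolates count. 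The paper's terse version really only yields $|T_{S}|\le|S|$ (which already suffices for Theorem~\ref{t2}), whereas your minimality argument pins down the exact equality and justifies the word ``constitutes'' more literally. Your identification of the cactus step as the real obstacle is apt; the paper never states it, but it is exactly what is needed to know that meeting every member of $\mathcal{F}(G)$ is the same as meeting every cycle of $G$.
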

\begin{proof}
First, notice that an isolated vertex is a valid member of a spanning forest. Since a decycling set contains a vertex from every cycle of $G$, and since $CI(G)$ is a tree, the result follows.
\end{proof}

Finally, we are ready to present a significant result. While the ensuing theorem is a direct consequence of the previous lemma, we distinguish these two as the previous lemma is a useful and significant result in its own right.

\begin{theorem}\label{t2}
Let $G$ be an even graph and $CI(G)$ its cycle intersection graph. If $CI(G)$ is a tree, then $\nabla(G)=|MSF(CI(G))|$.
\end{theorem}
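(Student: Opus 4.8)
The plan is to sandwich $\nabla(G)$ between $|MSF(CI(G))|$ from both sides, with the upper bound being immediate and the lower bound requiring a short counting argument. The inequality $\nabla(G)\le|MSF(CI(G))|$ is precisely Lemma \ref{l2}, so nothing new is needed in that direction. For the reverse inequality I would fix a \emph{minimum} decycling set $S$ of $G$, so that $|S|=\nabla(G)$, and use Lemma \ref{l3} to regard $S$ as inducing a spanning forest $F$ of $CI(G)$ (edges of $F$ being those pairs of cycles that share a vertex of $S$, together with the isolated vertices forced by cycles hit only at a private vertex). Since $MSF(CI(G))$ is by definition a minimum spanning forest, $|MSF(CI(G))|\le|F|$, so it will suffice to check that $|F|\le|S|$; combined with Lemma \ref{l2} this forces $\nabla(G)=|MSF(CI(G))|$.

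The only point requiring care is the bound $|F|\le|S|$, i.e.\ that passing from the decycling set to its induced spanning forest does not inflate the count, and this is where the tree hypothesis is used. By Lemma \ref{l1}, $G$ has a unique cycle decomposition $\mathcal{F}(G)$, which is exactly $V(CI(G))$. I would first observe that every vertex $v\in V(G)$ lies on at most two members of $\mathcal{F}(G)$: if $v$ lay on $k\ge 3$ of them, those $k$ cycles would be pairwise adjacent in $CI(G)$, producing a $K_k$ and hence a cycle in $CI(G)$, contradicting that $CI(G)$ is a tree. Hence each $v\in S$ accounts for at most one edge of $F$ (when $v$ lies on two cycles) or at most one isolated vertex of $F$ (when $v$ lies on one), and minimality of $S$ rules out the degenerate overlaps — two vertices of $S$ assigned the same edge, two assigned the same isolated vertex, or a private vertex assigned to a cycle that is already covered by an edge — since in each of those cases a vertex could be removed from $S$ while leaving it decycling. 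Thus the assignment from $S$ to $E(F)\cup\{\text{isolated vertices of }F\}$ is injective (indeed bijective), giving $|F|=|S|=\nabla(G)$ and therefore $|MSF(CI(G))|\le\nabla(G)$.

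I expect the main obstacle to be exactly this bookkeeping step: verifying that the correspondence between decycling sets of $G$ and spanning forests of $CI(G)$ is size-preserving on a minimum set, rather than anything structural. Once the ``at most two cycles through a vertex'' observation is in place the rest is routine, and the whole argument then collapses to combining it with Lemma \ref{l2}. It is also worth noting that the degenerate case $CI(G)=K_1$ (where $G$ is a single cycle) is consistent: $MSF(CI(G))$ is then a single isolated vertex of size $1$ and $\nabla(G)=1$, so the statement holds in the base case as well.
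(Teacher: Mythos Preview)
Your proposal is correct and follows the same overall route as the paper: the upper bound is Lemma~\ref{l2}, and the lower bound comes from Lemma~\ref{l3} by associating to a minimum decycling set a spanning forest of $CI(G)$ and comparing sizes. The paper's own proof is extremely terse---it simply cites Lemma~\ref{l3} together with the uniqueness of the cycle decomposition from Lemma~\ref{l1}---and leaves the size correspondence implicit. What you add is precisely the bookkeeping the paper omits: the observation that, because $CI(G)$ is a tree, every vertex of $G$ lies on at most two cycles of the (unique) decomposition, and the minimality argument showing the map from $S$ to edges and isolated vertices of the induced forest is injective. These details are genuinely needed to turn Lemma~\ref{l3} into the inequality $|MSF(CI(G))|\le\nabla(G)$, so your version is a more complete write-up of the same argument rather than a different approach.
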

\begin{proof}
The result follows from the previous lemma and from the fact that if $CI(G)$ is a tree, then $CI(G)$ is the unique cycle intersection graph of $G$ as $G$ has a unique decomposition into cycles.
\end{proof}

The next series of results are a direct application of Theorem \ref{t2} to even graphs whose cycle intersection graphs are well-defined trees.

\begin{theorem}\label{t3}
Let $G$ be an even graph and $CI(G)$ its cycle intersection graph. If $CI(G)$ is a path and $|E(CI(G))|=n$, then the decycling number of $G$ is given by $\nabla(G)=\big\lceil\frac{n+1}{2}\big\rceil$.
\end{theorem}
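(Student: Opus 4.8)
The plan is to reduce everything to Theorem~\ref{t2} and then carry out one short combinatorial optimization. Since a path is a tree, Theorem~\ref{t2} applies verbatim and gives $\nabla(G)=|MSF(CI(G))|$, so it suffices to show that the size of a minimum spanning forest of a path on $n+1$ vertices equals $\big\lceil\frac{n+1}{2}\big\rceil$ (the path $CI(G)$ has $n+1$ vertices because it has $n$ edges). Recall that the size of a spanning forest $F$ is $|E(F)|$ plus the number of degree-$0$ vertices of $F$, so the objective is a blend of an edge count and an isolated-vertex count.

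For the lower bound I would decompose an arbitrary spanning forest $F$ of the path into its connected components $T_1,\dots,T_c$. Each $T_j$ is a tree, so $|V(T_j)|=|E(T_j)|+1$. If $T_j$ is not an isolated vertex then $|V(T_j)|\ge 2$, whence $|E(T_j)|=|V(T_j)|-1\ge \tfrac12|V(T_j)|$; if $T_j$ is an isolated vertex it contributes exactly $1$ to the isolated-vertex count and has $|V(T_j)|=1$. Summing these inequalities over all components and using $\sum_j|V(T_j)|=n+1$ shows that the size of $F$ is at least $\tfrac12(n+1)$, and since the size is an integer it is at least $\big\lceil\frac{n+1}{2}\big\rceil$. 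For a matching upper bound, take a maximum matching $M$ of the path; it has $\big\lfloor\frac{n+1}{2}\big\rfloor$ edges and leaves $(n+1)\bmod 2$ vertices uncovered, so viewing $M$ together with its uncovered vertices as a spanning forest gives size $\big\lfloor\frac{n+1}{2}\big\rfloor+\big((n+1)\bmod 2\big)=\big\lceil\frac{n+1}{2}\big\rceil$. Combining the two bounds yields $|MSF(CI(G))|=\big\lceil\frac{n+1}{2}\big\rceil$, and Theorem~\ref{t2} closes the argument.

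The only genuine step is the lower bound, and the point to watch is that the quantity being minimized is neither "fewest edges" (which would want the empty subgraph, of size $n+1$) nor "most edges" (a Hamiltonian path, of size $n$) but a trade-off between the two; a near-perfect matching is what balances them, and the per-component estimate $|E(T_j)|\ge\tfrac12|V(T_j)|$ for nontrivial trees is exactly what makes this precise. After that, the only remaining care is tracking the parity of $n$ through the ceiling and floor, which is routine.
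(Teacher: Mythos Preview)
Your proof is correct and follows essentially the same route as the paper's: both reduce to Theorem~\ref{t2} and then compute $|MSF(CI(G))|$ for a path by observing that a maximum matching (plus, when $n$ is even, one extra isolated vertex) achieves the minimum. Your version is in fact more complete, since you supply the per-component lower bound $|E(T_j)|\ge\tfrac12|V(T_j)|$ that justifies optimality of the matching, a step the paper's proof asserts without argument.
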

\begin{proof}
If there are an odd number of edges in $CI(G)$ then the size of a minimum spanning forest is the size of a maximum matching. If there are an even number of edges in $CI(G)$ then a maximum matching does not saturate one vertex, hence a minimum spanning forest in this case requires an additional vertex. The result follows from the size of a maximum matching in a path.
\end{proof}

\begin{theorem}\label{t4}
Let $G$ be an even graph and $CI(G)$ its cycle intersection graph. If $CI(G)$ is a star and $|E(CI(G))|=n$, then the decyling number of $G$ is given by $\nabla(G)=n$.
\end{theorem}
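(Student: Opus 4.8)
The plan is to obtain the statement as a direct specialization of Theorem~\ref{t2}. A star is a tree, so Theorem~\ref{t2} applies verbatim and gives $\nabla(G)=|MSF(CI(G))|$. Consequently the entire task reduces to computing the size of a minimum spanning forest of a star on $n$ edges, using the convention fixed just before Lemma~\ref{l2} that this size counts edges \emph{and} isolated vertices.

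First I would record the one structural fact needed: every edge of the star $K_{1,n}$ is incident with the center, so no two of its edges are independent, and hence a maximum matching in $K_{1,n}$ consists of a single edge. Mirroring the reasoning in the proof of Theorem~\ref{t3}, a minimum spanning forest of $K_{1,n}$ is then obtained by taking one such matching edge together with the remaining $n-1$ leaves as isolated vertices, so $|MSF(CI(G))| = 1 + (n-1) = n$. Equivalently, any forest contained in $K_{1,n}$ with $k\ge 1$ edges is a substar $K_{1,k}$ at the center and leaves exactly $n-k$ leaves isolated, for a total size $k+(n-k)=n$, while the only spanning forest not of this form, the edgeless one, has the strictly larger size $n+1$; hence the minimum is $n$. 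Combining this with Theorem~\ref{t2} yields $\nabla(G)=n$.

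I do not expect a genuine obstacle here: the result is essentially immediate from Theorem~\ref{t2}, and the only point requiring a moment's care is to apply the paper's definition of the size of a spanning forest correctly, so that the forest ``all $n$ edges at the center'' and the forest ``one edge plus $n-1$ isolated leaves'' are both correctly seen to have size $n$, and so that the non-minimal edgeless forest is excluded. As a sanity check and an alternative that avoids invoking Theorem~\ref{t2}, the bound can also be seen directly from the decomposition: the $n$ non-central cycles are pairwise vertex-disjoint, which forces $\nabla(G)\ge n$, while their $n$ intersection points with the central cycle form a decycling set of size $n$, giving $\nabla(G)\le n$.
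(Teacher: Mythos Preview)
Your proposal is correct and follows exactly the paper's approach: invoke Theorem~\ref{t2} and compute the size of a minimum spanning forest of a star, which is $n$. The paper's proof is a single sentence asserting this computation without the detailed case analysis you give, and it does not include your alternative direct argument via the $n$ pairwise vertex-disjoint leaf cycles (which is a valid independent check).
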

\begin{proof}
It suffices to observe that the size of a minimum spanning forest of a star is $n$.
\end{proof}

\section{When $CI(G)$ is simple but not acyclic}
Up to this point we have only considered even graphs whose cycle intersection graphs are trees. To generalize this, we obviously need to consider even graphs whose cycle intersection graphs are cyclic. Before doing this, we first need to take the time to discuss the importance of choosing an ideal cycle decomposition of an even graph.

Now, the previous approach, which relied on spanning forests, seems only to be useful whenever $CI(G)$ is simple. If $G$ is an even graph, but $CI(G)$ is a multigraph, a spanning forest of $CI(G)$ (which in this case would be any edge) clearly does not lead up to a decycling set of $G$. To illustrate this point, consider the following example presented below in Figure \ref{fig1}. Let $G$ be an even graph whose cycle intersection graph is a multigraph on two vertices and $n$ edges. In this example we have apparently chosen to decompose the even graph $G$ into only two cycles that intersect at precisely three vertices. However, we may easily define a new cycle decomposition of $G$ in which there are $n$ cycles, in which each cycle has precisely two intersection vertices, and in which any two cycles intersect in at most one vertex; i.e., $CI(G)$ is itself a cycle. 

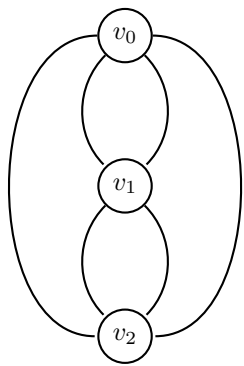
\begin{figure}[h!]
\centering
\begin{tikzpicture}[-,>=stealth',shorten >=1pt,auto,node distance=2cm,
                    thick,main node/.style={circle,draw}]
  \node[main node] (A)					{$v_{0}$};
  \node[main node] (B) [below of=A]		{$v_{1}$};
  \node[main node] (C) [below of=B] 	{$v_{2}$};
  \draw[thick,-] [bend left=45] (A) to (B);
  \draw[thick,-] [bend right=45] (A) to (B);
  \draw[thick,-] [bend left=45] (B) to (C);
  \draw[thick,-] [bend right=45] (B) to (C);
  \draw[thick,-] [bend left=90] (A) to (C);
  \draw[thick,-] [bend right=90] (A) to (C);
\end{tikzpicture}
\caption{An example of an even graph that could be represented as a multigraph on two vertices and three edges by decomposing the graph into a two edge-disjoint cycles which start at the vertex $v_{0}$ and travelling though the vertices $v_{1}$ and $v_{2}$ before returning to $v_{0}$, i.e., two edge-disjoint cycles that intersect in three vertices. Notice that this even graph can also decompose into cycles characterized by the vertex pairs $(v_{0},v_{1})$, $(v_{1},v_{2})$, and $(v_{0},v_{2})$, respectively. In the latter decomposition, the cycle intersection graph is a three cycle.}
\label{fig1}
\end{figure}

We see that by removing any two of the vertices $v_{0}$, $v_{1}$, or $v_{2}$, the original even graph is successfully decycled. Furthermore, notice that any smallest possible spanning forest of the simple version of $CI(G)$ in the previous example constitutes a smallest possible decycling set of $G$, while a smallest spanning forest of the multigraph version of $CI(G)$ does not correspond to a subset of the vertex set which suffices to decycle $G$.

Now that the importance of choosing an appropriate cycle decomposition (and thus cycle intersection graph) has been established, we proceed by proving results on the decycling number of even graph via simple cycle intersection graphs of these even graphs. We obtain these results through means of a crucial link between the decycling number of an even graph and the cycle rank of its cycle intersection graph. First, though, what would have been a very convenient property of even graphs (and was very nearly proposed herein as a conjecture), is the claim that every even graph admits a cycle decomposition whose associated cycle intersection graph is simple. The following (rather obvious) example proves otherwise.

\begin{figure}[h!]
\centering
\begin{tikzpicture}[-,>=stealth',shorten >=1pt,auto,node distance=2cm,
                    thick,main node/.style={circle,draw}]
  \node[main node] (A)                      {};
  \node[main node] (B) [below left of=A]    {};
  \node[main node] (C) [below right of=A]   {};
  \node[main node] (D) [right of=C]         {};
  \node[main node] (E) [below right of=B]   {};
  \draw[thick,-] (A) to (B);
  \draw[thick,-] (A) to (C);
  \draw[thick,-] (A) to (E);
  \draw[thick,-] (B) to (E);
  \draw[thick,-] (C) to (E);
  \draw[thick,-] (A) to (D);
  \draw[thick,-] (D) to (E);
\end{tikzpicture}
\caption{An example of an even graph that does admit a single cycle decomposition whose associated cycle intersection graph is simple.}
\label{fig2}
\end{figure}
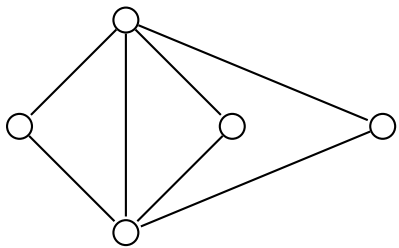

With this counterexample, we are forced to accept that not every even graph admits a simple cycle intersection graph. However, we use the rest of this section to prove results on those even graphs which do have this convenient, non-universal property.

\begin{theorem}\label{t5}
Let $G$ be an even graph which admits a simple cycle intersection graph $CI(G)$. Then $\nabla(G)\leq |E(CI(G))|-|V(CI(G))|+1-|MSF(CI(G))|$, i.e., the decycling number of an even graph $G$ is equal to the number of edges needed to decycle $CI(G)$ and then remove a minimum spanning forest from the resulting acyclic subgraph of $CI(G)$.
\end{theorem}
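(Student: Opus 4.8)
The plan is to reduce the whole question to the combinatorics of $CI(G)$ and then construct a decycling set of $G$ in the two stages named in the statement. The first step is to establish the governing equivalence, which upgrades the free observations behind Lemma~\ref{l1} and Lemma~\ref{l2}: writing $f$ for the map of Theorem~\ref{t1}, a set $S\subseteq V(G)$ contained in the image of $f$ satisfies ``$G\setminus S$ acyclic'' if and only if (i) $E_{S}:=f^{-1}(S)$ is an edge cover of $CI(G)$, so that every cycle of the decomposition loses a vertex, and (ii) $CI(G)\setminus E_{S}$ is acyclic. Necessity of (i) is immediate, and necessity of (ii) is exactly the phenomenon illustrated around Figure~\ref{fig1}: a cycle in $CI(G)\setminus E_{S}$ lifts to a cycle of $G\setminus S$ by splicing surviving arcs of the relevant decomposition cycles at their surviving shared vertices. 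For sufficiency I would take a hypothetical cycle $D$ in $G\setminus S$ and record the sequence $C_{i_{1}},\dots,C_{i_{k}}$ of decomposition cycles that its successive maximal edge-runs lie in; consecutive runs meet at a vertex of $D$, which, since $CI(G)$ is simple (two cycles share at most one vertex), is the unique vertex of the corresponding intersection and, lying on $D$, is not in $S$. Thus $C_{i_{1}}\cdots C_{i_{k}}$ is a closed walk in the forest $CI(G)\setminus E_{S}$, so it backtracks, and a backtrack puts two consecutive switches between the same pair $C_{a},C_{b}$ at the single vertex of $C_{a}\cap C_{b}$ — forcing $D$ to revisit a vertex, a contradiction. (The case $k=1$ is ruled out directly by (i).)

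With the equivalence in hand, the bound follows from building $E_{S}$ to be at once a feedback edge set and an edge cover of $CI(G)$, in the two stages the statement describes. First, delete a minimum feedback edge set $F$; because $G$ is connected its cycle intersection graph is connected (a separation of $CI(G)$ would separate $G$), so $|F|=|E(CI(G))|-|V(CI(G))|+1$ and $CI(G)\setminus F$ is a spanning tree $T$. Second, run the Lemma~\ref{l2} machinery on $T$: delete the edges of a minimum spanning forest of $T$ and, for each cycle whose $CI(G)$-vertex is an isolated component of that forest, one further vertex of $G$, so that the $G$-vertices used in this stage number at most $|MSF(T)|$. Taking $S$ to be the union of the $f$-images of $F$, the $f$-images of the spanning-forest edges, and those extra vertices, the two deletions together remove from $CI(G)$ only edges of the forest $T$, so (ii) holds, while the spanning property of the minimum spanning forest (with the extra vertices covering its isolated components) gives (i); hence $G\setminus S$ is acyclic and $\nabla(G)\le|S|\le|F|+|MSF(T)|$. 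When $CI(G)$ is already a tree this is $F=\emptyset$ and we recover Theorem~\ref{t2}.

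The remaining work is to match $|F|+|MSF(T)|=\bigl(|E(CI(G))|-|V(CI(G))|+1\bigr)+|MSF(T)|$ to the quantity asserted in Theorem~\ref{t5}, i.e. to reconcile what this direct two-stage construction naturally produces — the decycling cost of $CI(G)$ plus the covering cost of the leftover tree — with the closed form in the statement, and ideally to verify that nothing is lost in separating the two stages rather than directly selecting a single minimum edge set of $CI(G)$ that is simultaneously a feedback edge set and an edge cover, which is the genuinely optimal object here. Carrying that out — choosing $F$ so that it already covers as many vertices of $CI(G)$ as possible, and dovetailing this with the isolated-component bookkeeping built into $|MSF(\cdot)|$ — is the step I expect to be the main obstacle. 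A lesser nuisance is the non-injective case of $f$, where several cycles through one vertex of $G$ correspond to a whole clique of $CI(G)$: this only improves the bound, but the passage between deleted vertices of $G$ and deleted edges of $CI(G)$ must be phrased to accommodate it.
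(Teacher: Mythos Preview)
Your two-stage construction—delete a feedback edge set of $CI(G)$, then invoke the tree case—is exactly the paper's approach. The paper phrases the first stage as showing that after removing $S_G$ no edge of $G'$ lies in two cycles (so $\hat{G'}$ has a tree for its cycle intersection graph), via the same ``cycle chain'' contradiction you run; your closed-walk-with-backtracking version is in fact the more careful of the two, since the paper tacitly assumes the chain $C_0,\dots,C_{k-1}$ has no repetitions, whereas you handle that case explicitly.

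The ``remaining work'' you flag is not a gap in your argument but a defect in the displayed formula. What both your construction and the paper's own proof actually deliver is
\[
\nabla(G)\;\le\;\bigl(|E(CI(G))|-|V(CI(G))|+1\bigr)\;+\;|MSF(T)|,
\]
with $T$ the spanning tree left after decycling $CI(G)$; the verbal gloss in the theorem statement (``decycle $CI(G)$ and then remove a minimum spanning forest from the resulting acyclic subgraph'') says precisely this. The printed $-|MSF(CI(G))|$ is a sign/notation slip, so there is nothing further to reconcile.

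One small caution: the necessity half of your equivalence—that a surviving cycle in $CI(G)\setminus E_S$ lifts to a cycle of $G\setminus S$—is not as immediate as you indicate, since both arcs of some $C_i$ between its two designated intersection vertices may pass through other intersection vertices lying in $S$. You never use that direction for the upper bound, so it is harmless here, but it should not be advertised as established.
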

\begin{proof}
Let $S_{CI(G)}$ be any subset of $E(CI(G))$ whose removal from $CI(G)$ leaves some spanning tree of $CI(G)$, and let $S_{G}$ be the vertices of $G$ corresponding to the edges of $S_{CI(G)}$. It suffices to prove that no edge of $G^{\prime}=G\setminus S_{G}$ belongs to more than one cycle in $G^{\prime}$ (irrespective of the chosen cycle decomposition and cycle intersection graph of $G$), because then we may let $\hat{G^{\prime}}$ be the graph obtained by removing all paths of $G$ not contained in any cycle and apply Theorem \ref{t2} to complete the proof.

To derive a contradiction, assume that the edge $e\in E(G^{\prime})$ is contained in two different cycles in $G^{\prime}$. Let $C_{0}$ be the cycle in the chosen cycle decomposition of $G$ to which $e$ belongs. Then any other cycle in $G^{\prime}$ which contains $e$ must be formed by a series of intersecting cycles which returns to the cycle $C_{0}$. Denote the cycles comprising this cycle chain $C_{0}, C_{1}, \dots, C_{k-1}$ (we assume the cycle chain has length $k$) and denote the set of intersection vertices between these cycles by $IV=\{v_{0,1}, v_{1,2}, \dots, v_{k-1,0}\}$. In order for $e$ to belong to a cycle other than $C_{0}$ that still exists in $G^{\prime}$, it must be the case that $VI\subset V(G^{\prime})$. But this contradicts the definition of the sets $S_{CI(G)}$ and $S_{G}$. Therefore no edge in $E(G^{\prime})$ is a member of multiple cycles in $G^{\prime}$ and we may apply Theorem \ref{t2} to establish our upper bound and complete the proof.
\end{proof}

It is crucially important to note that this is left as an inequality in part because there may be multiple cycle decompositions of an even graph which admit a simple cycle intersection graph, and these cycle decompositions may not all have the same size. Even if every cycle decomposition of a given even graph which leads to a simple cycle intersection graph has the same size, the question remains, can the inequality proven in Theorem \ref{t5} be reduced to equality in every case? Additionally, if an even graph has cycle decompositions of different sizes which both yield simple cycle intersection graphs, which cycle decomposition is optimal? These questions will be recounted at the end of this paper, but we next turn our attention to those even graphs which do not admit a single simple cycle intersection graph.

\section{Cycle Intersection Multigraphs}
In this section we consider those even graphs which do not admit a single simple cycle intersection graph, i.e., every cycle decomposition of the even graph corresponds to a cycle intersection \textit{multigraph}. The primary issue with the technique introduced to find decycling sets of even graphs which admits simple cycle intersection graphs, when applied to even graphs which admit cycle intersection multigraphs, is a consequence of the multiplicity of edges between adjacent vertices in the cycle intersection multigraph. Let $G$ be an even graph and see that any two cycles of our even graph which intersect at multiple vertices, i.e., any pair of vertices in the cycle intersection multigraph which share multiple edges, may not be successfully decycled by removing only one intersection vertex. However, it turns out that we can generalize the theorem proven in the previous section to multigraphs, thereby attaining a bound on the decycling number of all even graphs. Intuitively this result holds true because, in the process of making the cycle intersection graph acyclic, we also remove all digons (2-cycles) in the cycle intersection graph.

\begin{theorem}\label{t6}
Let $G$ be an even graph and let $CI(G)$ be a cycle intersection graph of $G$ (not necessarily simple). Then $\nabla(G)\leq |E(CI(G))|-|V(CI(G))|+1-|MSF(CI(G))|$.
\end{theorem}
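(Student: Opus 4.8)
The plan is to rerun the proof of Theorem \ref{t5} essentially verbatim, pinpointing the one place where simplicity of $CI(G)$ was used and replacing it. Fix an arbitrary cycle decomposition of $G$ and let $CI(G)$ be its cycle intersection (multi)graph. As in Theorem \ref{t5}, choose $S_{CI(G)}\subseteq E(CI(G))$ whose deletion leaves a spanning tree $T$ of $CI(G)$; the circuit-rank count $|E(CI(G))|-|V(CI(G))|+1$ is valid for any connected multigraph, so $|S_{CI(G)}|=|E(CI(G))|-|V(CI(G))|+1$. Let $S_{G}\subseteq V(G)$ be the vertices of $G$ corresponding to the edges of $S_{CI(G)}$ under the map of Definition \ref{d3} (hence $|S_{G}|\leq|S_{CI(G)}|$, since that map need not be injective), and put $G^{\prime}=G\setminus S_{G}$.

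The crux, just as in Theorem \ref{t5}, is to show that no edge of $G^{\prime}$ lies on more than one cycle of $G^{\prime}$. If some $e\in E(G^{\prime})$ did, there would be a closed chain of members of the decomposition $C_{0},C_{1},\dots,C_{k-1}$ whose consecutive intersection vertices $v_{0,1},\dots,v_{k-1,0}$ all survive in $G^{\prime}$; this chain traces a closed walk in $CI(G)$ along the edges corresponding to those vertices. Barring a single edge traversed back and forth, such a walk contains a cycle of $CI(G)$, and the new feature of the multigraph case is that this ``cycle'' may be a digon --- which is precisely what a length-$2$ chain produces, namely two cycles of the decomposition sharing two intersection vertices (two cycles sharing only one vertex cannot together bound a cycle of $G$, so the degenerate back-and-forth case does not occur). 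Since $T$ is acyclic it contains no cycle and no digon, so the walk must use an edge of $S_{CI(G)}$, forcing some $v_{i,i+1}\in S_{G}$ and contradicting their survival. Therefore every edge of $G^{\prime}$ lies on at most one cycle, and, exactly as in Theorem \ref{t5}, deleting from $G^{\prime}$ the paths lying in no cycle yields $\hat{G^{\prime}}$, whose cycles are pairwise edge-disjoint and whose cycle intersection graph is a subforest of $T$; Theorem \ref{t2} then applies to $\hat{G^{\prime}}$, and combining $\nabla(G)\leq|S_{G}|+\nabla(\hat{G^{\prime}})$ with the accounting of Theorem \ref{t5} gives the bound.

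The step I expect to be the main obstacle is the genuinely new one: verifying that a length-$2$ chain is forced to be a digon of $CI(G)$, and, more generally, reducing an arbitrary offending closed walk in $CI(G)$ --- which may revisit cycles and intersection vertices --- to an honest cycle or digon so that one can conclude it meets $S_{CI(G)}$. Everything else (the circuit-rank count for multigraphs, the passage to $\hat{G^{\prime}}$, and the appeal to Theorem \ref{t2}) is inherited unchanged from Theorem \ref{t5} and should be routine.
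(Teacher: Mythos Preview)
Your proposal is correct and follows essentially the same approach as the paper: the paper's proof is a one-line remark that Theorem~\ref{t5} goes through verbatim once one observes that cutting $CI(G)$ down to a spanning tree necessarily removes all digons (parallel edges) as well, which is precisely the point you isolate and argue carefully. Your write-up is in fact more explicit than the paper's about why the closed-walk argument still forces an edge of $S_{CI(G)}$ in the multigraph setting.
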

\begin{proof}
The proof is similar to that of Theorem \ref{t5}, requiring the removal of all but one edge from each set of multiple edges to remove all digons from $CI(G)$.
\end{proof}

The most prominent implication of this result is that the choice of cycle decomposition truly matters. By minimizing the number of multiple edges, we reduce the number of vertices we add to the decycling set of $G$ we construct from $CI(G)$. However, as there is not necessarily an ideal cycle decomposition of a given even graph to choose for this process, it may be best to consider this an optimization problem. For a given even graph $G$, let $\mathcal{C}_{G}$ be the space of all cycle decompositions of $G$, let $CI_{C}(G)$ denote the cycle intersection graph associated to the specific cycle decomposition $C$ of $G$, and let $r(G)$ denote the cycle rank of a graph. The ideal cycle decomposition of an even graph, for the purpose of finding the decycling number of that even graph, is any solution of the form
\begin{equation}\label{stateOP}
\hat{C}\in\mathcal{C}_{G}\ \mathrm{s.t.}\ r(CI_{\hat{C}}(G))\leq r(CI_{C}(G))\ \forall\ C\in\mathcal{C}(G)
\end{equation}

\section{Conclusion and Open Problems}
In this paper we introduce the notion of a cycle intersection graph of a graph which is the edge-disjoint analog of the cycle adjacency graph of a graph. We showed how the cycle intersection graph is obtained from an even graph and linked the cycle intersection graphs of an even graph directly to the cycle decompositions of an even graph. We then used this tool to bound (and in some cases explicitly compute) the decycling number of even graphs. However, the problem of finding an ideal cycle decomposition is difficult and was formalized in Statement \ref{stateOP}. Nevertheless, by using either (or both) a maximum spanning even subgraph or a minimum covering even graph of a graph, we can bound the decycling number of any graph in terms of these two associated even graphs. Given these results, a few interesting questions can be asked.

\begin{question}\label{q1}
Which even graphs admit a simple cycle intersection graph?
\end{question}

\begin{question}\label{q2}
What does a solution to Statement \ref{stateOP} look like? How can it/they be found? What is the complexity of finding a solution to Statement \ref{stateOP} for an arbitrary even graph?
\end{question}

\begin{question}\label{q3}
Can the inequality proven in Theorem \ref{t6} be reduced to equality in general?
\end{question}

\bibliography{Decycling1}
\end{document}